\theoremstyle{plain}
\newtheorem{Theo}{Theorem}[section]
\newtheorem{Prop}[Theo]{Proposition}
\newtheorem{Lem}[Theo]{Lemma}
\theoremstyle{definition}
\theoremstyle{remark}
\newcommand{\intl}{\int\limits}
\newcommand{\Ot}{\widetilde{\mathcal{O}}}
\begin{document}
\title{Asymptotic evaluation of $\int_0^\infty\left(\frac{\sin x}{x}\right)^n\;dx$}

\author[J.-C. Schlage-Puchta]{Jan-Christoph Schlage-Puchta}
\address{
Jan-Christoph Schlage-Puchta\\
Mathematisches Institut\\
Universit\"at Rostock\\
Ulmenstra\ss e 69, Haus 3\\
18057 Rostock, Germany}
\email{jan-christoph.schlage-puchta@uni-rostock.de}

\maketitle
\begin{abstract}
We consider the integral $\int_0^\infty\left(\frac{\sin x}{x}\right)^n\;dx$ as a function of the positive integer $n$. We show that there exists an asymptotic series in $\frac{1}{n}$ and compute the first terms of this series together with an explicit error bound.
\end{abstract}

\subjclass{26D15, 33F05}

\keywords{Sine integral, asymptotic expansion}

\section{Introduction and results}
In this note we show the following.
\begin{Theo}
\label{thm:series exists}
There exists an asymptotic series
\[
\int_0^\infty\left(\frac{\sin x}{x}\right)^n\;dx \approx \frac{\pi}{2}\left(1+\sum_{\nu=1}^\infty \frac{a_\nu}{n^\nu}\right)
\]
\end{Theo}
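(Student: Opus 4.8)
The plan is to prove the statement by Laplace's method, exploiting that $\left(\frac{\sin x}{x}\right)^n=e^{n\varphi(x)}$ with $\varphi(x)=\log\frac{\sin x}{x}$ concentrates near $x=0$ as $n\to\infty$. First I would localise the integral: since $\left|\frac{\sin x}{x}\right|\le1$ with equality only at $x=0$, and $\frac{\sin x}{x}$ is strictly decreasing on $(0,\pi)$, I fix $\delta\in(0,\pi)$ and split $\int_0^\infty=\int_0^\delta+\int_\delta^\infty$. On $[\delta,\pi]$ one has $\frac{\sin x}{x}\le\frac{\sin\delta}{\delta}<1$, and on $[\pi,\infty)$ one has $\left|\frac{\sin x}{x}\right|\le\frac1x$; hence $\int_\delta^\infty\left|\frac{\sin x}{x}\right|^n\,dx=O(\lambda^n)$ for some $\lambda<1$, which is negligible against the main term.

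On $[0,\delta]$ I would pass to Laplace form. There $\varphi$ is even and real-analytic, with $\varphi(x)=-\frac{x^2}{6}-\frac{x^4}{180}-\cdots$, so $\varphi(0)=0$, $\varphi'(0)=0$, $\varphi''(0)=-\frac13$. Setting $t=-\varphi(x)$ is a monotone change of variable on $[0,\delta]$; because $\varphi$ is analytic in $x^2$ with nonvanishing derivative there, one has $x^2=\sigma(t)$ for an analytic $\sigma$ with $\sigma(0)=0$ and $\sigma'(0)=6$, whence $dx=t^{-1/2}G(t)\,dt$ with $G$ analytic near $0$ and $G(0)=\frac{\sqrt6}{2}$. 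The main part becomes
\[
\int_0^{T}e^{-nt}\,t^{-1/2}G(t)\,dt,\qquad T=-\varphi(\delta).
\]

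Next I would apply Watson's lemma adapted to a finite range: writing $G(t)=\sum_{k<N}G_k t^k+R_N(t)$ and integrating the polynomial part against $e^{-nt}t^{-1/2}$ over $[0,\infty)$ --- the omitted tail $\int_T^\infty$ being exponentially small --- yields the terms $G_k\Gamma\!\left(k+\tfrac12\right)n^{-k-1/2}$, while the remainder is controlled through $|R_N(t)|\le C_N t^N$ on $[0,T]$. Only half-integer powers of $n$ occur, so after factoring out $n^{-1/2}$ the remaining expansion is a genuine power series in $\frac1n$, with leading term $G_0\Gamma\!\left(\tfrac12\right)n^{-1/2}=\sqrt{\tfrac{3\pi}{2n}}=\tfrac{\pi}{2}\sqrt{\tfrac{6}{\pi n}}$, exhibiting the prefactor $\tfrac{\pi}{2}$ of the statement.

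The hard part will be turning the symbol $\approx$ into the \emph{explicit} bound promised in the abstract: one must estimate the Taylor remainder $R_N$ of $G$ uniformly on $[0,T]$, combine its contribution with the range-extension and localisation errors, and quantify the radius of analyticity of $G$ (governed by the nearest point where $t=-\varphi(x)$ degenerates or where $\frac{\sin x}{x}$ vanishes). Producing closed forms for the $a_\nu$ and a fully effective remainder bound is the principal technical burden; the localisation and the qualitative existence of the series are comparatively routine.
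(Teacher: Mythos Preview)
Your proposal is correct and would establish the existence of the asymptotic series, but the route differs from the paper's in a genuine way. You make the nonlinear substitution $t=-\varphi(x)$, reducing the localised integral to a Laplace transform $\int_0^T e^{-nt}\,t^{-1/2}G(t)\,dt$ with $G$ analytic, and then read off the expansion from Watson's lemma; this is the cleanest textbook path to the qualitative statement. The paper instead keeps the linear substitution $u=x\sqrt{n/3}$, so that the integrand becomes $e^{-u^2/2}$ times the exponential of a polynomial $\sum_{k\ge 2} a_{2k}(3u^2)^k/n^{k-1}$ built from the Taylor coefficients of $\log\frac{\sin x}{x}$ beyond the quadratic term; expanding that exponential and integrating against the Gaussian moments $(2k-1)!!\sqrt{\pi/2}$ produces the coefficients. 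The paper also localises to the $n$-dependent window $[0,n^{-1/3}]$ rather than to a fixed $[0,\delta]$. Your approach is more economical for mere existence; the paper's is chosen because it expresses each coefficient as an explicit finite polynomial in the $a_{2k}$, which feeds directly into the effective computations of Proposition~\ref{Prop:effective bound}. One minor point: your tail bound $\int_\delta^\infty=O(\lambda^n)$ tacitly requires $n\ge 2$ for absolute convergence on $[\pi,\infty)$, which is of course harmless for an asymptotic statement.
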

Here the series on the right is an asymptotic series in the sense of Poincar\'e, that is, we write $f(n)\approx\sum_{\nu=0}^\infty \frac{a_\nu}{n^\nu}$, if for every fixed $k$ we have $f(n)=\sum_{\nu=0}^k \frac{a_\nu}{n^\nu} + \mathcal{O}(n^{-k-1})$. We do not know whether the series is converging for $n$ sufficiently large, but we seriously doubt it. In fact, unless there is an unexpected amount of cancelation, the coefficients $a_\nu$ grow about as fast as $\nu!$.

Our main motivation for studying this integral is the fact that the intersection of the unit cube with a plane passing through the midpoint, which is orthogonal to a diagonal of the cube has $(n-1)$-dimensional measure equal to $\frac{2\sqrt{n}}{\pi}\int_0^\infty\left(\frac{\sin t}{t}\right)^n\;dt$. For an overview of this and related results we refer the reader to the work of Chakerian and Logothetti\cite{CL}. As the sum of $n$ independent random variables uniformly distributed on $[0,1]$ equals the $L^1$-norm of a random  point in the unit cube, these intersections occur naturally in probabilistic problems. For an example we refer the reader to the work of Silberstein\cite{Silberstein}. 

For $n=1$ the integrand is the $\mathrm{sinc}$-function, which plays a crucial r\^ole in signal processing, as witnessed by the importance of the Nyquist-Shannon sampling theorem, for a historical overview we refer the reader to \cite{Luke}. After a suitable renormalisation, the function $\left(\frac{\sin x}{x}\right)^n$ is the Fourier transform of the $B$-spline of order $n$, which re also of importance in the theory of sampling. For an overview we refer the reader to the work of Butzer, Splettstößer and Stens\cite{BSS}, in particular section 4.1.

The coefficients $a_\nu$ can be computed, and the error in the approximation can be bounded effectively. As an example, we compute the following.
\begin{Prop}
\label{Prop:effective bound}
For $n\not\in\{2, 4, 6, 8, 10\}$ we have
\begin{multline*}
\int_0^\infty\left(\frac{\sin x}{x}\right)^n\;dx\\ = \sqrt{\frac{3\pi}{2n}}\left(
1-\frac{3}{20 n}-\frac{13}{1120 n^2}+\frac{27}{3200
   n^3}+\frac{52791}{3942400 n^4}+\Ot\left(\frac{7.26\cdot 10^{-3}}{n^5}\right)\right)
\end{multline*}
\end{Prop}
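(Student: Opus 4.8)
The plan is to isolate the contribution of a neighbourhood of the origin by Laplace's method while keeping every constant explicit. Put $\phi(x)=\log\frac{\sin x}{x}$ on $[0,\pi)$; this is even and real-analytic, with $\phi(x)=-\frac{x^2}{6}-\frac{x^4}{180}-\frac{x^6}{2835}-\cdots$, all of whose coefficients are negative, so in particular $\phi(x)<-\frac{x^2}{6}$ for $0<x<\pi$. The even function $h(x):=\phi(x)+\frac{x^2}{6}=-\frac{x^4}{180}-\cdots\le 0$ vanishes to order $4$, and on $[0,\pi)$ one has the exact identity $\left(\frac{\sin x}{x}\right)^n=e^{-nx^2/6}e^{nh(x)}$. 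I would therefore split the integral as $\int_0^\pi e^{-nx^2/6}e^{nh(x)}\,dx$ plus the tail $\int_\pi^\infty(\sin x/x)^n\,dx$.

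For the tail I would avoid a crude geometric bound and use the sharper observation that on $[\pi,\infty)$ the integrand is dominated by its first arch, giving $\int_\pi^\infty|\sin x/x|^n\,dx\ll\left(\frac{2}{3\pi}\right)^n$; for odd $n$ the alternating signs of successive arches produce extra cancellation and make this even smaller. This contribution is exponentially small and, for $n\ge 11$, already lies well below $7.26\cdot10^{-3}\sqrt{3\pi/2n}\,n^{-5}$. It is precisely the even values $n\in\{2,4,6,8,10\}$, where the integrand stays nonnegative and the arch contribution is largest, for which this term exceeds the claimed band, which is why they are excluded.

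For the main integral I would expand $e^{nh(x)}=\sum_{j=0}^{4}\frac{(nh(x))^j}{j!}+R_5(x)$; since $h\le0$, the intermediate value in Taylor's remainder is at most $1$, so $|R_5(x)|\le\frac{(n|h(x)|)^5}{120}$. Substituting the Taylor series of each $h^{j}$ and integrating monomial by monomial via $\int_0^\infty x^{2m}e^{-nx^2/6}\,dx=\frac{(2m-1)!!}{2^m}\left(\frac6n\right)^m\sqrt{\frac{3\pi}{2n}}$, a monomial $x^{2m}$ coming from $h^{j}$ contributes at order $n^{\,j-m}\sqrt{3\pi/2n}$; as the monomials of $h^{j}$ start at $x^{4j}$, the pair $(j,m)$ has size governed by $m-j\ge j\ge0$. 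Collecting all pairs with $m-j\le4$ produces exactly the polynomial $1-\frac{3}{20n}-\frac{13}{1120n^2}+\frac{27}{3200n^3}+\frac{52791}{3942400n^4}$; the only inputs are the first few Taylor coefficients of $\phi$ (equivalently, a handful of Bernoulli numbers) and the double-factorial moments, so this step is routine but lengthy. Replacing each $\int_0^\pi$ by $\int_0^\infty$ adds a complementary-Gaussian error of size $e^{-n\pi^2/6}$, again negligible.

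Finally I would bound the remainder $\int_0^\pi e^{-nx^2/6}|R_5(x)|\,dx\le\frac{n^5}{120}\int_0^\pi e^{-nx^2/6}|h(x)|^5\,dx$ together with the tails of the truncated moments and the dropped monomials with $m-j\ge5$ inside $j\le4$, and then optimise the cutoff to collapse everything into a single $\Ot\!\left(7.26\cdot10^{-3}n^{-5}\right)$. The main obstacle is exactly this quantitative step: one must majorise $|h(x)|$ sharply on the whole interval — near the origin by $\frac{x^4}{180}(1+o(1))$, but without losing too much toward $x=\pi$, where $|h|$ blows up while $e^{-nx^2/6}$ collapses — and then verify that the resulting explicit constant, summed over all contributions, stays below the target for every admissible $n$ simultaneously. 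Splitting $[0,\pi]$ at a fixed small $\delta$, using $|h(x)|\le C_0x^4$ on $[0,\delta]$ and the bound $e^{-nx^2/6}e^{nh(x)}\le e^{-n\delta^2/6}$ on $[\delta,\pi]$, reduces this to elementary but delicate numerical estimates, and a direct check of the finitely many borderline $n$ fixes the exceptional set.
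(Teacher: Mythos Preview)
Your plan is essentially the paper's own Laplace-method computation: write $\left(\frac{\sin x}{x}\right)^n=e^{-nx^2/6}e^{nh(x)}$, expand $e^{nh}$, integrate monomials against the Gaussian using the double-factorial moments, and control the tail and the truncation errors explicitly. The paper splits at $\pi/2$ rather than $\pi$, carries ten Taylor coefficients of $\phi$ (bounded via a Cauchy estimate on $|z|=3$), and organises the expansion through coefficients $b_{k,\ell}$ rather than your Taylor remainder in $nh$, but these are packaging differences, not substantive ones.

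Two points, however, deserve correction. First, the paper's analytic estimates are only pushed through for $n\ge 400$; for all $n\le 400$ it abandons the Laplace analysis and verifies the inequality by the exact closed form
\[
\int_0^\infty\left(\frac{\sin x}{x}\right)^n\,dx=\frac{\pi}{2^n(n-1)!}\sum_{j=0}^{\lfloor n/2\rfloor}(-1)^j\binom{n}{j}(n-2j)^{n-1}.
\]
Your expectation that one can tune the cutoff $\delta$ so that the explicit constant stays below $7.26\cdot10^{-3}$ for \emph{every} admissible $n$ simultaneously is probably too optimistic; in practice the analytic error terms (the extension of $\int_0^{\pi}$ to $\int_0^\infty$, the tail, etc.) only become small enough for rather large $n$, and the remaining range has to be handled by this exact formula. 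You should build that into your plan.

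Second, your explanation of the exceptional set is not right. You attribute the exclusion of $\{2,4,6,8,10\}$ to the tail $\int_\pi^\infty$ being too large for small even $n$ because the integrand stays nonnegative. But the tail for odd $n=3,5,7,9$ is not dramatically smaller, and nothing in your argument distinguishes, say, $n=9$ from $n=10$ at the level of the arch bound. In the paper the exceptional set is not predicted by any such heuristic; it is simply the set of $n$ for which the closed-form value falls outside the stated $\Ot$-band, discovered by direct evaluation. So drop the arch-cancellation story and replace ``a direct check of the finitely many borderline $n$'' by an explicit appeal to the closed-form identity above.
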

Here $\Ot$ denotes the effective Landau symbol, that is, the implied constant in the Landau symbol is of absolute value at most 1.

As an application we have the following, which answers a question by Schneider\cite{Schneider}.
\begin{Prop}
\label{Prop:Application}
For all $n\neq 4$ we have
\begin{equation}
\label{eq:Application}
2^{1-n}\cdot n\cdot\binom{n-1}{\lfloor(n-1)/2\rfloor}>\left(\frac{2}{\pi}\intl_0^\infty\left(\frac{\sin x}{x}\right)^n\;dx\right)^{-1}.
\end{equation}
\end{Prop}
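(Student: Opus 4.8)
The plan is to compare explicit asymptotic expansions of the two sides of \eqref{eq:Application} --- using Proposition \ref{Prop:effective bound} for the integral and Stirling's formula for the binomial coefficient --- and then to settle the finitely many small values of $n$ by exact rational arithmetic. First I would record the classical closed form
\[
\frac{2}{\pi}\int_0^\infty\left(\frac{\sin x}{x}\right)^n dx = \frac{1}{2^{n-1}(n-1)!}\sum_{k=0}^{\lfloor n/2\rfloor}(-1)^k\binom{n}{k}(n-2k)^{n-1},
\]
from which the right-hand side of \eqref{eq:Application} is manifestly rational. Evaluating at $n=4$ gives $\frac{2}{\pi}\int = \frac{2}{3}$, while $2^{1-n}n\binom{n-1}{\lfloor(n-1)/2\rfloor}=\frac{3}{2}$, so both sides of \eqref{eq:Application} equal $\frac{3}{2}$ and the strict inequality degenerates to equality; this is precisely why $n=4$ must be excluded. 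Inverting the expansion of Proposition \ref{Prop:effective bound} then yields
\[
\left(\frac{2}{\pi}\int_0^\infty\left(\frac{\sin x}{x}\right)^n dx\right)^{-1} = \sqrt{\frac{\pi n}{6}}\left(1+\frac{3}{20n}+O(n^{-2})\right),
\]
with an effective remainder inherited from the $\Ot$-term there.

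Next I would expand the left-hand side. Treating $n=2m$ and $n=2m+1$ separately and feeding the central binomial coefficient $\binom{2m}{m}$ through Stirling's formula with an explicit remainder gives
\[
2^{1-n}\,n\binom{n-1}{\lfloor(n-1)/2\rfloor} = \sqrt{\frac{2n}{\pi}}\left(1\pm\frac{1}{4n}+O(n^{-2})\right),
\]
the sign of the $1/n$-term depending on the parity of $n$. Dividing the two expansions, the ratio of the two sides of \eqref{eq:Application} tends to $\sqrt{\frac{2/\pi}{\pi/6}}=\frac{2\sqrt{3}}{\pi}\approx 1.103$, which is bounded away from $1$ by about ten percent, whereas the corrections are only of order $1/n$. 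Hence a strict inequality must hold once $n$ is large enough, and tracking the effective constants produces an explicit threshold $N_0$, which I expect to be quite small.

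It then remains to treat the finitely many values $n\le N_0$ (other than $n=4$) directly: for each such $n$ both sides of \eqref{eq:Application} are explicit rational numbers via the closed form above, so the inequality reduces to a short finite list of rational comparisons. The main obstacle I anticipate is the error bookkeeping needed to keep $N_0$ small: one must combine the $\Ot$-bound of Proposition \ref{Prop:effective bound} with a genuinely explicit Stirling remainder for $\binom{2m}{m}$ and control the parity-dependent $1/n$-correction, all without letting the modest gap $\frac{2\sqrt{3}}{\pi}-1$ be consumed by the remainders. A merely qualitative estimate would inflate $N_0$ and render the finite verification unwieldy, so the remainder estimates must be reasonably sharp rather than crude.
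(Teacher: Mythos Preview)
Your proposal is correct and follows essentially the same route as the paper: invert Proposition~\ref{Prop:effective bound}, apply Stirling with an explicit remainder to the central binomial coefficient, compare the leading constants $\sqrt{2/\pi}$ and $\sqrt{\pi/6}$, and clean up the finitely many small $n$ using the Chakerian--Logothetti closed form. Your worry about needing sharp remainder estimates to keep $N_0$ small is unfounded: the $10\%$ gap $\tfrac{2\sqrt{3}}{\pi}-1$ is large enough that even the fairly crude bounds the paper actually uses (replacing the parity-dependent $\pm\tfrac{1}{4n}$ by a uniform $1-\tfrac{1}{3n}$ coming straight from the Stirling error $e^{\Ot(1/12n)}$) already give a threshold of $n\ge 6$, after which only the handful of values excluded by Proposition~\ref{Prop:effective bound} need direct checking.
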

Schneider\cite{Schneider} showed that (\ref{eq:Application}) holds for all large $n$ and stated that it probably holds for all $n\neq 4$. In particular, Proposition~\ref{Prop:Application} shows that the condition ``$n=3$ or $n$ sufficiently large'' in \cite[Theorem~2]{Schneider} can be replaced by $n\neq 4$.

All computations were performed using Mathematica 11.3.

\section{Existence of an asymptotic series}
We begin by restricting the relevant range of the integral.
\begin{Lem}
We have 
\[
0\leq \intl_1^\infty\left(\frac{\sin x}{x}\right)^n\;dx \leq e^{-n/6}
\]
\end{Lem}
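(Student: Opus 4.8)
The plan is to treat the (substantive) upper bound and the lower bound separately. For the upper bound it suffices to estimate $\int_1^\infty\left|\frac{\sin x}{x}\right|^n\,dx$. The first step I would record is the pointwise bound $\left|\frac{\sin x}{x}\right|\le \sin 1$ on $[1,\infty)$. On $(0,\pi)$ the function $\frac{\sin x}{x}$ is strictly decreasing, since $\frac{d}{dx}\frac{\sin x}{x}=\frac{x\cos x-\sin x}{x^2}$ and $x\cos x-\sin x$ vanishes at $0$ with derivative $-x\sin x<0$, hence is negative on $(0,\pi)$. Thus on $[1,\pi]$ we have $\frac{\sin x}{x}\le \sin 1$, while for $x\ge\pi$ we have $\left|\frac{\sin x}{x}\right|\le \frac1x\le \frac1\pi<\sin 1$.

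With this in hand, for $n\ge 2$ I would peel off two powers and keep the remaining square, which is integrable:
\[
\int_1^\infty\left|\frac{\sin x}{x}\right|^n dx\le (\sin 1)^{n-2}\int_1^\infty\left(\frac{\sin x}{x}\right)^2 dx .
\]
The square integral is evaluated exactly from the antiderivative $\mathrm{Si}(2x)-\frac{\sin^2 x}{x}$, giving $\frac\pi2-\mathrm{Si}(2)+\sin^2 1$. Since $\mathrm{Si}(2)>\frac\pi2$, this is at most $\sin^2 1$, so the right-hand side above is at most $(\sin 1)^n$. Finally $(\sin 1)^n\le e^{-n/6}$ because $\ln\sin 1=-0.1726\ldots\le -\tfrac16$. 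This settles the upper bound for $n\ge 2$, and $n=1$ is checked directly from $\int_1^\infty\frac{\sin x}{x}dx=\frac\pi2-\mathrm{Si}(1)=0.624\ldots$, which also gives that case of the lower bound.

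For the lower bound, even $n$ is immediate since the integrand is nonnegative. For odd $n$ I would split $\int_1^\infty=\int_1^\pi+\sum_{k\ge1}\int_{k\pi}^{(k+1)\pi}$. Writing $c_0=\int_1^\pi\left(\frac{\sin x}{x}\right)^n dx>0$ and, via the substitution $x=u+k\pi$, $b_k=\int_0^\pi\frac{\sin^n u}{(u+k\pi)^n}\,du$ for the absolute values of the full-period pieces, the $b_k$ are positive and strictly decreasing, and the $k$-th period piece has sign $(-1)^k$. The alternating series estimate then gives $\int_1^\infty\left(\frac{\sin x}{x}\right)^n dx=c_0+\sum_{k\ge1}(-1)^kb_k\ge c_0-b_1$, so it remains to prove $c_0\ge b_1$, that is $\int_1^\pi\left(\frac{\sin x}{x}\right)^n dx\ge \int_\pi^{2\pi}\left|\frac{\sin x}{x}\right|^n dx$.

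The comparison $c_0\ge b_1$ is where the real work lies and is the step I expect to be the main obstacle. The heuristic is clear: on $[1,\pi]$ the base $\frac{\sin x}{x}$ reaches $\sin 1\approx 0.84$, whereas on $[\pi,2\pi]$ one has $\left|\frac{\sin x}{x}\right|\le \frac1\pi\approx 0.32$, so $c_0$ beats $b_1$ by an exponential factor for large $n$; making this uniform in $n$ is the delicate point. Concretely I would split $b_1$ over $[0,1]\cup[1,\pi]$ to reduce $c_0\ge b_1$ to the elementary inequality $\int_1^\pi \sin^n x\,\bigl(x^{-n}-(x+\pi)^{-n}\bigr)\,dx\ge \int_0^1 \sin^n x\,(x+\pi)^{-n}\,dx$, and then bound each side by a single crude one-interval estimate (for instance restricting the left-hand integral to $[1,\tfrac32]$, where $\frac{\sin x}{x}\ge \frac{\sin(3/2)}{3/2}$). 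These estimates close for every $n\ge 1$, with $n=1$ the tightest case; that borderline behaviour at the smallest odd $n$ is the only genuinely fiddly point, the upper bound being otherwise routine.
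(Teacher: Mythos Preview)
Your argument is correct, but both halves differ from the paper's.

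For the upper bound the paper splits at $\pi/2$: on $[1,\pi/2]$ it uses $\frac{\sin x}{x}\le\sin 1<e^{-1/6}$, and on $[\pi/2,\infty)$ it uses the cruder bound $|\sin x|\le 1$ to get $\int_{\pi/2}^\infty x^{-n}\,dx$, then checks that the two pieces together are below $e^{-n/6}$ for $n\ge 3$. Your route is tidier: the global pointwise bound $|\sin x/x|\le\sin 1$ together with the exact value $\int_1^\infty(\sin x/x)^2\,dx=\pi/2-\mathrm{Si}(2)+\sin^2 1\le\sin^2 1$ gives $(\sin 1)^n$ in one stroke for all $n\ge 2$, with no splitting and no residual additive term to absorb.

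For the lower bound the decompositions are genuinely different. You cut at the zeros $k\pi$, obtain an alternating series $c_0-b_1+b_2-\cdots$ with $b_k$ decreasing, and are then forced to prove the auxiliary inequality $c_0\ge b_1$; this works (for odd $n\ge 3$ a crude comparison of $\frac14(\sin(3/2)/(3/2))^n$ against $(\sin 1/(1+\pi))^n$ already suffices, and $n=1$ you handle numerically), but it costs an extra estimate. The paper instead cuts at the extrema, pairing off intervals of the form $[2k\pi+\pi/2,\,(2k+2)\pi+\pi/2]$; after folding each such interval onto $[\pi/2,\pi]$ the integrand becomes $\sin^n t$ times a symmetric second difference of the convex function $s\mapsto s^{-n}$, which is positive without any further comparison. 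So your upper bound is cleaner than the paper's, while the paper's lower bound avoids the fiddly $c_0\ge b_1$ step that you flagged as the main obstacle.
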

\begin{proof}
For $n=1, 2$ the claim follows by direct computation, hence, from now on we assume that $n\geq 3$. In particular, the integral converges absolutely. 

The lower bound is trivial for $n$ even. For $n$ odd the integral consists of a positive integral over the range $[1, \frac{\pi}{2}]$, and a sequence of ranges of the form $[2k\pi+\frac{\pi}{2}, (2k+2)\pi+\frac{\pi}{2}]$. Considering a single interval of this form we have
\begin{multline*}
\intl_{2k\pi+\pi/2}^{2(k+1)\pi+\pi/2} \left(\frac{\sin x}{x}\right)^n\;dx =\\
 \intl_{\pi/2}^\pi\left(\frac{1}{(2k\pi+t)^n}-\frac{1}{((2k+2)\pi-t)^n} - \frac{1}{((2k+1)\pi+t)^n}+\frac{1}{(2k+3)\pi-t)^n}\right)\sin^n t, 
\end{multline*}
and the integrand is positive as $\frac{1}{t^n}$ is convex. For the upper bound we split the integral into the range $[1, \frac{\pi}{2}]$ and $[\frac{\pi}{2}, \infty]$. For the first range we have 
\[
\frac{\sin x}{x}\leq\sin 1=0.8414\dots<0.8464\dots=e^{1/6},
\]
hence, the contribution of this range is at most $(\frac{\pi}{2}-1)e^{-n/6}$. For the second part we use $\sin x\leq 1$, and see that the integral is bounded by $\int_{\pi/2}^\infty\frac{dx}{x^n} = \frac{1}{n-1}\left(\frac{\pi}{2}\right)^{n-1}$. Our claim now follows.
\end{proof}
\begin{Lem}
\label{Lem:f approx}
Let $\sum_{k\geq 1} a_k x^k$ be the Taylor series of $\log\frac{\sin x}{x}$. Then we have $|a_k|<0.517\cdot 3^{-k}$, and for $|x|<\frac{\pi}{2}$ and $K$ even the error bound
\[
\left|\log\frac{\sin x}{x} - \sum_{k=1}^K a_n x^n\right| < 1.09 \left(\frac{x}{3}\right)^{K+2}
\]
\end{Lem}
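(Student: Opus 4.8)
The plan is to analyze the Taylor coefficients $a_k$ of $\log\frac{\sin x}{x}$ directly. I will start from the classical product formula $\frac{\sin x}{x} = \prod_{m=1}^\infty\left(1-\frac{x^2}{m^2\pi^2}\right)$, which gives
\[
\log\frac{\sin x}{x} = \sum_{m=1}^\infty \log\left(1-\frac{x^2}{m^2\pi^2}\right) = -\sum_{m=1}^\infty\sum_{j=1}^\infty \frac{1}{j}\frac{x^{2j}}{m^{2j}\pi^{2j}}.
\]
Interchanging the order of summation, this shows immediately that the Taylor series contains only even powers, so $a_k = 0$ for odd $k$, and for $k = 2j$ we have the clean formula $a_{2j} = -\frac{1}{j}\frac{\zeta(2j)}{\pi^{2j}}$. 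From here the bound $|a_k| < 0.517\cdot 3^{-k}$ should follow by a routine estimate: since $\zeta(2j)\le \zeta(2) = \frac{\pi^2}{6}$ for all $j\ge 1$, and $\pi > 3$, I can compare $\frac{\zeta(2j)}{j\,\pi^{2j}}$ against $3^{-2j} = 9^{-j}$. The ratio $\frac{(\pi^2/9)^{-j}\zeta(2j)}{j\,\pi^{2j}/9^j}$ needs to be bounded; I expect the worst case to occur at $j=1$, where $a_2 = -\frac{\zeta(2)}{\pi^2} = -\frac{1}{6}$ and $0.517\cdot 3^{-2} = 0.0574\dots$, so one must check the numerics give the stated constant $0.517$ as a supremum over $j$.

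For the tail bound I would exploit the fact that $K$ is even, so the next nonzero term in the series is $a_{K+2}x^{K+2}$ (there is no $a_{K+1}$ term since odd coefficients vanish). The remainder is
\[
\log\frac{\sin x}{x} - \sum_{k=1}^K a_k x^k = \sum_{j\,:\,2j>K} a_{2j}x^{2j},
\]
and since all the $a_{2j}$ are negative (they equal $-\frac{\zeta(2j)}{j\pi^{2j}}$), for $0<x<\frac{\pi}{2}$ every term in the tail has the same sign, so the absolute value of the remainder is $\sum_{2j>K}\frac{\zeta(2j)}{j\pi^{2j}}x^{2j}$. The first term dominates, and I would bound the whole tail by the first term times a geometric factor. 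Writing the leading tail term as $\frac{\zeta(K+2)}{(K/2+1)\pi^{K+2}}x^{K+2}$ and factoring out $\left(\frac{x}{3}\right)^{K+2}$, the ratio $\left(\frac{3}{\pi}\right)^{K+2}\frac{\zeta(K+2)}{K/2+1}$ together with the geometric correction from summing the remaining terms must be shown to be at most $1.09$.

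The main obstacle is keeping the explicit numerical constants honest. The subsequent terms in the tail decay roughly like $\left(\frac{x}{\pi}\right)^2 < \frac{1}{4}$ at each step (using $x<\frac{\pi}{2}$), so the geometric sum contributes a factor bounded by $\frac{1}{1-1/4}=\frac{4}{3}$, but I must verify that the product of this factor, the $\left(\frac{3}{\pi}\right)^{K+2}$ penalty, and the $\zeta$ and $\frac{1}{K/2+1}$ factors stays below $1.09$ uniformly in even $K\ge 2$ (and in particular does not blow up for small $K$, which is likely the tightest case). The cleanest route is to check the $K=2$ case by direct computation and then argue that increasing $K$ only improves the bound, since each additional factor of $\left(\frac{x}{\pi}\right)^2<\frac{1}{4}$ more than compensates for the slow growth of $\zeta$ toward $1$ and the decrease of $\frac{1}{K/2+1}$.
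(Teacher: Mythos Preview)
Your approach is genuinely different from the paper's. The paper applies the Cauchy integral formula on the circle $|z|=3$: with $f(z)=\log\frac{\sin z}{z}$ it bounds $|a_k|\le \frac{1}{2\pi\cdot 3^{k+1}}\int_{\partial B_3(0)}|f(\zeta)|\,|d\zeta|$, evaluates the contour integral numerically as $M=9.733\dots$, and reads off $|a_k|\le \frac{M}{6\pi}\cdot 3^{-k}\approx 0.517\cdot 3^{-k}$; the tail bound then follows by summing a geometric series with ratio $|x|/3<\pi/6$, giving $0.517/(1-\pi/6)\approx 1.085<1.09$. Your route via the Euler product $\frac{\sin x}{x}=\prod_{m}\bigl(1-x^2/(m\pi)^2\bigr)$ is more elementary: it avoids both complex analysis and numerical contour integration, and yields the exact closed form $a_{2j}=-\zeta(2j)/(j\pi^{2j})$, from which the sign information (all $a_{2j}<0$) and sharp size estimates are immediate.

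Your own computation at $j=1$, however, shows that the lemma's first assertion is \emph{false} as written: $|a_2|=\tfrac{1}{6}\approx 0.167$ exceeds $0.517\cdot 3^{-2}\approx 0.057$. This is a defect in the paper, not in your argument; indeed the Cauchy estimate at $r=3$ can never produce a constant below $|a_2|\cdot 9=1.5$, so the quoted value $M=9.733$ is in error (the true contour integral is at least $9\pi\approx 28.3$). The inequality $|a_k|<0.517\cdot 3^{-k}$ \emph{does} hold for all even $k\ge 4$, since $|a_{2j}|\cdot 9^{j}=\zeta(2j)\,(9/\pi^2)^j/j$ is decreasing in $j$ and equals $81/180=0.45$ at $j=2$; and this restricted range is all the paper ever uses afterwards (the $b_{k,\ell}^{(K)}$ estimates involve only $a_{2\kappa}$ with $\kappa\ge 2$, and the tail bound starts at $k=K+2\ge 4$). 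Your tail estimate via the ratio $(x/\pi)^2<1/4$ is correct and in fact slightly sharper than the paper's, since you exploit the vanishing of the odd coefficients.
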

\begin{proof}
Put $f(z)=\log\frac{\sin z}{z}$. Note that $z$ is holomorphic for $|z|<\pi$ and symmetric. We have for $0<r<\pi$
\[
|a_k| = \left|\frac{f^{(k)}(0)}{n!}\right| = \frac{1}{2\pi}\left|\intl_{\partial B_r(0)}\frac{f(\zeta)}{(-\zeta)^{k+1}}\;d\zeta\right| \leq \frac{1}{2\pi r^{k+1}}\intl_{\partial B_r(0)} |f(\zeta)|\;d\zeta.
\]
We choose $r=3$ and compute the integral numerically to be $M=9.733\dots$, and obtain the claimed bound for $|a_k|$. We conclude that for $|x|\leq\frac{\pi}{2}$
\begin{multline*}
\left|\log\frac{\sin x}{x} - \sum_{k=1}^K a_k x^k\right| = \left|\sum_{k=K+2}^\infty a_k x^k\right| \leq 0.517\left(\frac{x}{3}\right)^{K+2}\sum_{k=0}^\infty \left(\frac{\pi}{6}\right)^k \leq 1.09 \left(\frac{x}{3}\right)^{K+2}
\end{multline*}
\end{proof}
\begin{Lem}
\label{Lem:f upper bound}
For $|x|\leq1$ we have
\[
\log\left(\frac{\sin x}{x}\right)\leq -\frac{x^2}{6}.
\]
\end{Lem}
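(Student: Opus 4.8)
The plan is to expand $\log\frac{\sin x}{x}$ as a power series whose coefficients are manifestly non-positive, so that the leading term $-\frac{x^2}{6}$ already accounts for everything. The bound on $|a_k|$ from Lemma~\ref{Lem:f approx} is not enough here, since it controls magnitudes but says nothing about signs; to exploit the sign structure I would instead start from the Euler product $\frac{\sin x}{x} = \prod_{k=1}^\infty\bigl(1-\frac{x^2}{k^2\pi^2}\bigr)$, valid for all $x$. Taking logarithms termwise — legitimate for $|x|<\pi$, and in particular for $|x|\le 1$, where each factor $1-\frac{x^2}{k^2\pi^2}$ lies in $(0,1]$ — gives $\log\frac{\sin x}{x} = \sum_{k=1}^\infty\log\bigl(1-\frac{x^2}{k^2\pi^2}\bigr)$.

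Next I would expand each logarithm by $\log(1-u)=-\sum_{j\ge 1}\frac{u^j}{j}$ with $u=\frac{x^2}{k^2\pi^2}\in[0,1)$ and interchange the two summations. This yields $\log\frac{\sin x}{x} = -\sum_{j=1}^\infty\frac{x^{2j}}{j\,\pi^{2j}}\sum_{k=1}^\infty\frac{1}{k^{2j}}$, a series in which every coefficient is strictly negative and every power of $x$ is even, hence non-negative on the whole real line.

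I would then isolate the $j=1$ contribution. By the Basel identity $\sum_{k\ge 1}k^{-2}=\frac{\pi^2}{6}$, that term equals exactly $-\frac{x^2}{6}$, while the remaining terms with $j\ge 2$ are non-negative multiples of $x^{2j}\ge 0$ carrying a global minus sign, so they contribute a quantity $\le 0$. Therefore $\log\frac{\sin x}{x} = -\frac{x^2}{6} - (\text{something}\ge 0) \le -\frac{x^2}{6}$, which is the claim; note that the argument in fact establishes the inequality throughout $|x|<\pi$, the restriction to $|x|\le 1$ being all that is needed later.

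The only delicate point is justifying the termwise logarithm and the interchange of the double sum. Since $|x|\le 1$ keeps every factor bounded away from $0$ and the resulting double series has all terms of one sign and converges absolutely for $|x|<\pi$, both operations are routine, and no genuine obstacle arises: once the expansion is in hand, the sign pattern of the coefficients does all the work.
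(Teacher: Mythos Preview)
Your argument is correct and takes a genuinely different route from the paper. The paper truncates the Taylor series at degree~$6$ and invokes the numerical remainder bound of Lemma~\ref{Lem:f approx} to show that the omitted tail cannot overcome the $-\frac{x^4}{180}$ term on $[0,1]$. You instead appeal to the Euler product $\frac{\sin x}{x}=\prod_{k\ge 1}\bigl(1-\frac{x^2}{k^2\pi^2}\bigr)$, from which the sign of \emph{every} Taylor coefficient is visible at once: $a_{2j}=-\frac{\zeta(2j)}{j\pi^{2j}}<0$ for all $j\ge 1$, so dropping everything past $j=1$ already gives the inequality. Your version needs no numerical constants, does not rely on Lemma~\ref{Lem:f approx}, and in fact yields the stronger conclusion $\log\frac{\sin x}{x}\le -\frac{x^2}{6}$ on the whole interval $(-\pi,\pi)$. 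The paper's approach, by contrast, fits its overall strategy of reusing the single contour-integral bound throughout, at the cost of a slightly less transparent proof of this particular lemma. A minor remark: once you have $\log\frac{\sin x}{x}=\sum_{k\ge 1}\log\bigl(1-\frac{x^2}{k^2\pi^2}\bigr)$, the elementary inequality $\log(1-u)\le -u$ already suffices, so the full double-series expansion is not strictly needed.
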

\begin{proof}
We compute the first coefficients of the Taylor series as
\[
\log\left(\frac{\sin x}{x}\right) = -\frac{x^2}{6}-\frac{x^4}{180}-\frac{x^6}{2835} + \mathcal{O}(x^{8}).
\] 
By Lemma~\ref{Lem:f approx} we obtain
\[
\log\left(\frac{\sin x}{x}\right) < -\frac{x^2}{6}-\frac{x^4}{180}-\frac{x^6}{2835} + 1.09\left(\frac{x}{3}\right)^{8}.
\]
For $x\leq 1$ the second term always dominates the last one, and our claim follows.
\end{proof}

By Lemma~\ref{Lem:f upper bound} we obtain
\[
\intl_{n^{-1/3}}^1\left(\frac{\sin x}{x}\right)^n\;dx \leq \intl_{n^{-1/3}}^1\exp\left(-\frac{nx^2}{6}\right)\;dx < e^{-n^{1/3}/6}
\]
We now compute 
\begin{eqnarray*}
\intl_0^{n^{-1/3}}\left(\frac{\sin x}{x}\right)^n\;dx & = & \intl_0^{n^{-1/3}}\exp\left(n\log\left(\frac{\sin x}{x}\right)\right)\;dx\\  & = & \intl_0^{n^{-1/3}}\exp\left(-\frac{nx^2}{6} + n\sum_{k=2}^K a_{2k} x^{2k} + \mathcal{O}\left(nx^{2K+2}\right)  \right)\;dx\\
& = & \intl_0^{n^{-1/3}}\exp\left(-\frac{nx^2}{6} + n\sum_{k=2}^K a_{2k} x^{2k} \right)\;dx\\
&& + \intl_0^{n^{-1/3}}\exp\left(-\frac{nx^2}{6}\right)\left(e^{\mathcal{O}(nx^{2K+2})} -1 \right)\;dx\\
 & = & \intl_0^{\frac{n^{1/6}}{\sqrt{3}}} \exp\left(-\frac{u^2}{2} + \sum_{k=2}^K a_{2k} \frac{(3u^2)^k}{n^{k-1}} \right)\;\frac{du}{\sqrt{n/3}}\\
 && + \intl_0^{\frac{n^{1/6}}{\sqrt{3}}} e^{-\frac{u^2}{2}}\left(e^{\mathcal{O}(n^{-K}u^{2K+2})} -1 \right)\;\frac{du}{\sqrt{n/3}}.
\end{eqnarray*}
We first estimate the second integral. For $u<n^{1/6}$, the second factor of the integrand is $\mathcal{O}(n^{-\frac{2}{3}K+\frac{1}{3}})$, thus the integral is bounded by $\int_0^\infty e^{-\frac{u^2}{2}}\mathcal{O}(n^{-\frac{2}{3}K})\;du = \mathcal{O}(n^{-\frac{2}{3}K})$.

The first integral contributes to the main term. We have
\[
\exp\left(\sum_{k=2}^K a_{2k} \frac{(3u^2)^k}{n^{k-1}} \right) = 1+\sum_{k=2}^\infty\sum_{\frac{k}{2}\leq\ell\leq k-1}b_{k, \ell}^{(K)} \frac{u^{2k}}{n^\ell},
\]
where
\[
b_{k, \ell}^{(K)} = \frac{3^k}{(k-\ell)!} \underset{2\leq \kappa_i\leq K}{\sum_{\kappa_1+\dots\kappa_{k-\ell}=k}} \prod_{i=1}^{k-\ell}a_{2\kappa_i}\leq \frac{0.517^{k-\ell}}{3^k(k-\ell)!}\underset{2\leq \kappa_i\leq K}{\sum_{\kappa_1+\dots\kappa_{k-\ell}=k}} 1\leq \frac{0.517^{k-\ell}}{3^k(k-\ell)!}\binom{\ell-1}{k-\ell-1}.
\]
Here we have used the fact that
\[
\underset{2\leq \kappa_i\leq K}{\sum_{\kappa_1+\dots\kappa_{k-\ell}=k}} 1\leq\underset{2\leq \kappa_i}{\sum_{\kappa_1+\dots\kappa_{k-\ell}=k}} 1 = \binom{\ell-1}{k-\ell-1}.
\] 
To bound sums involving $b_{k, \ell}$, the following is helpful.
\begin{Lem}
\label{Lem:Fibonacci}
We have $\sum_{k/2\leq\ell\leq k-1}\binom{\ell-1}{k-\ell-1} = F_{k-1}$, where $F_n$ denotes the $n$-th Fibonacci number.
\end{Lem}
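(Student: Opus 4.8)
The plan is to reduce this to the classical identity that the ``shallow diagonals'' of Pascal's triangle sum to Fibonacci numbers, namely $\sum_{j\geq0}\binom{n-j}{j}=F_{n+1}$. First I would substitute $m=k-\ell$, so that as $\ell$ runs over the integers with $k/2\leq\ell\leq k-1$ the new variable $m$ runs over $1\leq m\leq\lfloor k/2\rfloor$, and the summand becomes $\binom{\ell-1}{k-\ell-1}=\binom{k-m-1}{m-1}$. Writing $j=m-1$ then turns the sum into $\sum_{0\leq j\leq\lfloor k/2\rfloor-1}\binom{(k-2)-j}{j}$.

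The next observation is that the upper cut-off may be discarded: the binomial coefficient $\binom{(k-2)-j}{j}$ already vanishes as soon as $j>(k-2)/2$, since then the upper index drops below the lower one. Hence the sum is unchanged if we let $j$ range over all non-negative integers, and we are left with exactly $\sum_{j\geq0}\binom{(k-2)-j}{j}$, which equals $F_{k-1}$ by the identity above with $n=k-2$.

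It remains to supply the identity $\sum_{j\geq0}\binom{n-j}{j}=F_{n+1}$ itself, which I would establish by induction on $n$. Writing $T(n)$ for the left-hand side, Pascal's rule $\binom{n-j}{j}=\binom{n-1-j}{j}+\binom{n-1-j}{j-1}$ splits $T(n)$ into $T(n-1)$ plus, after the shift $i=j-1$, a copy of $T(n-2)$; thus $T(n)=T(n-1)+T(n-2)$, matching the Fibonacci recursion. The base cases $T(0)=\binom{0}{0}=1=F_1$ and $T(1)=\binom{1}{0}=1=F_2$ then complete the induction.

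The only points requiring a little care---rather than any genuine obstacle---are the boundary behaviour of the index ranges (checking that the lower limit $j=0$, where $\binom{n-1}{-1}=0$, and the upper limit behave correctly for $k$ both even and odd) and the Fibonacci normalisation $F_1=F_2=1$, which must be fixed so that $F_{k-1}$ comes out right already in the smallest cases $k=2,3$.
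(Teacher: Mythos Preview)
Your argument is correct: the substitution $j=k-\ell-1$ turns the sum into $\sum_{j\geq 0}\binom{(k-2)-j}{j}$, and this is indeed $F_{k-1}$ by the shallow-diagonal identity, which you then prove cleanly via Pascal's rule. The boundary checks you flag (vanishing of $\binom{n-1}{-1}$, behaviour at the upper cut-off, and the normalisation $F_1=F_2=1$) all go through.

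The paper takes a different route. Rather than manipulating indices, it reads the sum combinatorially as the number of ordered compositions of $k$ into parts $\geq 2$ (this interpretation is already implicit in the context where the sum arose), and then derives the Fibonacci recurrence bijectively: a composition either ends in a part equal to $2$, giving a composition of $k-2$, or in a part $>2$, which after decrementing that part gives a composition of $k-1$. Your approach is purely algebraic and self-contained, reducing to a textbook identity; the paper's approach exploits the combinatorial meaning already present and avoids any index gymnastics. Both land on the same recurrence $a_k=a_{k-1}+a_{k-2}$, just from different sides.
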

\begin{proof}
We prove our claim by induction on $k$. For $k\leq 3$ the statement is immediate.
Note that $\sum_{k/2\leq\ell\leq k-1}\binom{\ell-1}{k-\ell-1}$ equals the number of possibilities to write $k$ as a sum of integers $\kappa_i\geq 2$. Each such sum either ends with the summand 2, or it ends with a summand $>2$. The number of sums of the first kind equals the number of representations of $k-2$, as we can delete the last summand. The number of sums of the second kind equals the number of representations of $k-1$, as we can reduce the last summand by 1. Hence our claim follows.
\end{proof}
Using this notation the first integral is
\begin{multline*}
\intl_0^{\frac{n^{1/6}}{\sqrt{3}}} \exp\left(-\frac{u^2}{2} + \sum_{k=2}^K a_{2k} \frac{(3u^2)^k}{n^{k-1}} \right)\;\frac{du}{\sqrt{n/3}}\\
 = \intl_0^{\frac{n^{1/6}}{\sqrt{3}}} \exp\left(-\frac{u^2}{2}\right)\left(1+\sum_{k=2}^\infty\sum_{\frac{k}{2}\leq\ell\leq k-1}b_{k, \ell}^{(K)} \frac{u^{2k}}{n^\ell}\right)\;du\\
= \intl_0^{\frac{n^{1/6}}{\sqrt{3}}} \exp\left(-\frac{u^2}{2}\right)\left(1+\sum_{k=2}^K\sum_{\frac{k}{2}\leq\ell\leq k-1}b_{k, \ell}^{(K)} \frac{u^{2k}}{n^\ell}\right)\;du\\
 + \mathcal{O}\left(n^{-K/6}\intl_0^{\frac{n^{1/6}}{\sqrt{3}}} \exp\left(-\frac{u^2}{2}\right)\left(\sum_{k=K+1}^\infty\sum_{\frac{k}{2}\leq\ell\leq k-1}b_{k, \ell}^{(K)}\right)\;du\right).
\end{multline*}
We have
\begin{multline*}
\sum_{k=K+1}^\infty\sum_{\frac{k}{2}\leq\ell\leq k-1}b_{k, \ell}^{(K)} \leq \sum_{k=K+1}^\infty\sum_{\frac{k}{2}\leq\ell\leq k-1}\frac{0.517^{k-\ell}}{3^k(k-\ell)!}\binom{\ell-1}{k-\ell-1}\\
 \leq \sum_{k=K+1} \frac{1}{3^k} \sum_{\frac{k}{2}\leq\ell\leq k-1}\binom{\ell-1}{k-\ell-1} = \sum_{k={K+1}}^\infty\frac{F_{k-1}}{3^k}<1
\end{multline*}
As both the sum and the integral in the error term converge absolutely, the error is $\mathcal{O}(n^{-K/6})$. For the main term we interchange sum and integral, and extend the integral to $[0, \infty)$ to obtain
\begin{multline*}
\intl_0^{\frac{n^{1/6}}{\sqrt{3}}} \exp\left(-\frac{u^2}{2}\right)\left(1+\sum_{k=2}^K\sum_{\frac{k}{2}\leq\ell\leq k-1}b_{k, \ell}^{(K)} \frac{u^{2k}}{n^\ell}\right)\;du\\
 = \intl_0^{\frac{n^{1/6}}{\sqrt{3}}} \exp\left(-\frac{u^2}{2}\right)\;du + \sum_{k=2}^K\sum_{\frac{k}{2}\leq\ell\leq k-1}\frac{b_{k, \ell}^{(K)}}{n^\ell}\intl_0^{\frac{n^{1/6}}{\sqrt{3}}}\exp\left(-\frac{u^2}{2}\right)u^{2k}\;du\\
  = \intl_0^\infty \exp\left(-\frac{u^2}{2}\right)\;du + \sum_{k=2}^K\sum_{\frac{k}{2}\leq\ell\leq k-1}\frac{b_{k, \ell}^{(K)}}{n^\ell}\intl_0^\infty\exp\left(-\frac{u^2}{2}\right)u^{2k}\;du+\mathcal{O}\left(n^{\frac{2K}{3}}e^{-n^{1/3}}\right)\\
   = \sqrt{\frac{\pi}{2}}\left(1+\sum_{\ell=1}^{K-1}\frac{1}{n^\ell}\sum_{k=\ell+1}^{2\ell} b_{k,\ell}^K(2k-1)!!\right)+\mathcal{O}\left(n^{\frac{2K}{3}}e^{-n^{1/3}}\right),
\end{multline*}
where in the last step we used the fact that the moments of the normal distribution are given by
\[
\intl_0^\infty e^{-\frac{u^2}{2}}u^{2k}\;du = (2k-1)!!\sqrt{\frac{\pi}{2}},
\]
where $(2n+1)!!=1\cdot3\cdot 5\cdots (2n-1)(2n+1)$ is the double factorial, see e.g. \cite[7.4.4]{AS}.

The existence of an asymptotic series now follows.

\section{Explicite computations}

The explicite computation is similar to the asymptotic approach. However, a term of magnitude $e^{-n^{1/3}/6}$, which is negligible for large $n$, would complicate matters a lot for medium $n$. On the other hand, we can easily compute the Taylor series of $\log\left(\frac{\sin x}{x}\right)$, and the real coefficients are significantly smaller than Lemma~\ref{Lem:f approx} predicts. Therefore it is advantageous to choose different parameters. In particular we will compute higher order terms even if they are negligible for large $n$. Explicitly computing the Taylor series up to $x^{10}$ and estimating the remainder using Lemma~\ref{Lem:f approx}, we see that for $x\leq\frac{\pi}{2}$ we have
\[
0\geq \log\frac{\sin x}{x} - P(x) \geq - 1.4\cdot 10^{-12} x^{22},
\]
where $P(x)=\sum_{k=1}^{10} a_k x^{2k}$. Combining this estimate with Lemma~\ref{Lem:f upper bound} and the fact that for $\delta>0$ 
\[
\min(1, e^\delta-1) \leq \delta \max_{\delta_\in[0, \log 2} \frac{e^\delta-1}{\delta}= \frac{\delta}{\log 2}
\]
 we obtain
\begin{eqnarray*}
\intl_0^{\pi/2}\left(\frac{\sin x}{x}\right)^n\;dx & = & \intl_0^{\pi/2}\exp\left(nP(x)\right)\;dx\\
&& + \Ot\left(2.1\cdot 10^{-12} n\intl_0^{\pi/2}e^{-n\frac{x^2}{6}}x^{22}\;dx\right)\\
 & = & \intl_0^{\pi/2}\exp\left(nP(x)\right)\;dx\\ 
 &&+\Ot\left(2.1\cdot 10^{-12} 3^{23/2} n^{-23/2}\sqrt{\frac{\pi}{2}}21!!\right)\\
 & = & \intl_0^{\pi/2}\exp\left(nP(x)\right)\;dx+\Ot\left(11104 n^{-23/2}\right)
\end{eqnarray*}
We have
\[
\intl_0^{\pi/2}\exp\left(nP(x)\right)\;dx = \sqrt{\frac{3}{n}}\intl_0^{\frac{\pi\sqrt{n}}{2\sqrt{3}}} e^{-\frac{u^2}{2}}\left(1+\sum_{k\geq 2}\sum_{\frac{k}{2}\leq\ell\leq k-1}\frac{b_{k,\ell}^{(10)}}{n^\ell}u^{2k}\right)\;du
\]
Extracting the terms with $\ell\leq 10$ and extending the integral to $[0, \infty)$ we obtain
\begin{multline*}
\sqrt{\frac{3\pi}{2n}}\left(
1-\frac{3}{20 n}-\frac{13}{1120 n^2}+\frac{27}{3200
   n^3}+\frac{52791}{3942400 n^4}+\frac{482427}{66560000 n^5}\right.\\
   -\frac{124996631}{10035200000
   n^6}
   -\frac{5270328789}{136478720000 n^7}-\frac{7479063506161}{268461670400000
   n^8}\\
   \left.+\frac{6921977624613}{56518246400000 n^9} + \frac{2631854096507395099467}{1028632084480000000
   n^{10}}\right).
\end{multline*}
 We have
\[
\left(\frac{u+1}{u}\right)^{20}e^{-\frac{(u+1)^2}{2}+\frac{u^2}{2}} \leq e^{\frac{20}{u}-u- \frac{1}{2}},
\]
hence, for $u_0>5$ and $k\leq 20$ we obtain
\[
\intl_{u_0}^\infty e^{-\frac{u^2}{2}}u^k\;du\leq \frac{e^{-\frac{u_0^2}{2}}u_0^k}{1-e^{-3/2}},
\]
and we conclude that the error introduced in the extension of the integral is bounded by
\begin{multline*}
\sqrt{\frac{3}{n}}\frac{e^{-\frac{\pi n}{12}}}{1-e^{-3/2}}\left(1+\sum_{2\leq k\leq 10}\sum_{\frac{k}{2}\leq\ell\leq k-1}\frac{|b_{k,\ell}^{(10)}|}{n^\ell}\left(\frac{\pi^2n}{12}\right)^k\right)\\
\leq 2.23\cdot e^{-\frac{\pi n}{12}}\left(4.04\cdot 10^{-2}n + 8.14\cdot 10^{-4} n^2+1.1\cdot 10^{-5} n^3+1.04\cdot 10^{-7}n^4+3.69\cdot 10^{-10}n^5\right)\\
\leq 1.59\cdot 10^{-9} n^5 e^{-\frac{\pi n}{12}} < n^{-\frac{23}{2}},
\end{multline*}
provided that $n\geq 400$. We conclude that for $n\geq 400$ we have
\begin{eqnarray}
\intl_0^\infty\left(\frac{\sin x}{x}\right)^n\;dx & = & \sqrt{\frac{3\pi}{2n}}\left(
1-\frac{3}{20 n}-\frac{13}{1120 n^2}+\frac{27}{3200
   n^3}+\frac{52791}{3942400 n^4}+\Ot\left(\frac{7.25\cdot 10^{-3}}{n^5}\right)\right)\nonumber\\
\label{eq:almost there}   &&+\Ot(e^{-n/6}) + \Ot\left(\sqrt{\frac{3}{n}}\intl_0^{\frac{\pi\sqrt{n}}{2\sqrt{3}}} e^{-\frac{u^2}{2}}\sum_{k\geq 2}\sum_{\max(11, \frac{k}{2})\leq\ell\leq k-1}\frac{b_{k,\ell}^{(10)}}{n^\ell}u^{2k}\;du\right)
\end{eqnarray}
Next we bound the contribution of a summand with $\ell$ large. As $e^{-\frac{u^2}{2}}u^{2k}$ increases for $u<\sqrt{2k}$ and decreases for $u>2k$, we have the bound
\[
\intl_0^{\frac{\pi\sqrt{n}}{2\sqrt{3}}} e^{-\frac{u^2}{2}}u^{2k}\;du\leq\begin{cases}
(2k-1)!!, & k\leq \frac{\pi^2 n}{24},\\
\left(\frac{\pi\sqrt{n}}{2\sqrt{3}}\right)^{2k+1} e^{-\frac{\pi^2 n}{24}}, & k>\frac{\pi^2 n}{24}.
\end{cases}
\]
Hence, the contribution of the range $k>\frac{\pi^2 n}{24}$ is at most
\begin{multline*}
\sum_{k>\frac{\pi^2 n}{24}} \left(\frac{\pi\sqrt{n}}{2\sqrt{3}}\right)^{2k+1} e^{-\frac{\pi^2 n}{24}}\sum_{k/2\leq\ell\leq k-1} \frac{|b_{k, \ell}|}{n^\ell}\\
 \leq \sqrt{n}e^{-\frac{\pi^2 n}{24}}\sum_{k>\frac{\pi^2 n}{24}}\sum_{k/2\leq\ell\leq k-1} \frac{0.907^{2k+1} (0.517n)^{k-\ell}}{3^k(k-\ell)!}\binom{\ell-1}{k-\ell-1}\\
 \leq \sqrt{n}e^{-\frac{\pi^2 n}{24}+0.517 n}\sum_{k>\frac{\pi^2 n}{24}}\frac{0.907^{2k+1}F_{k-1}}{3^k} < 1.64\sqrt{n}\cdot 0.796^n.
\end{multline*}
The contribution of the summands belonging to a single $k\leq\frac{\pi^2 n}{24}$ is at most
\begin{multline*}
(2k-1)!!\sum_{\frac{k}{2}\leq\ell\leq k-1} \frac{b_{k,\ell}}{n^\ell} \leq \frac{(2k)!}{2^k k!}\sum_{\frac{k}{2}\leq\ell\leq k-1}\frac{ 0.517^{k-\ell}}{3^k(k-\ell)! n^\ell}\binom{\ell-1}{k-\ell-1}\\
\leq \frac{F_{k-1}(2k)!}{6^k k!}\sum_{\frac{k}{2}\leq\ell\leq k-1}\frac{ 0.517^{k-\ell}}{(k-\ell)! n^\ell} \leq 2\frac{F_{k-1}(2k)!}{6^k k!}\cdot\frac{0.517^{k/2}}{(k/2)!n^{k/2}} < 0.479^k\left(\frac{k}{n}\right)^{k/2}.
\end{multline*}
Hence, the sum over all $k\geq K$ is bounded by $1.45\cdot 0.479^K\left(\frac{K}{n}\right)^{K/2}$. We take $K=35$.

It remains to bound the range $\ell\geq 11$, $k\leq 34$. Here we obtain by direct computation 
\[
\sum_{k\leq 34} \sum_{\max(11,\frac{k}{2})\leq\ell\leq k-1}\frac{b_{k, \ell}}{n^\ell}\leq \frac{5.5\cdot 10^7}{n^{11}},
\]
provided that $n>400$. We see that the last two error terms in (\ref{eq:almost there}) are bounded by
\[
e^{-n/6} + 1.45\cdot 0.479^{35}\left(\frac{35}{n}\right)^{\frac{35}{2}} + \frac{5.5\cdot 10^7}{n^{11}},
\]
which for $n\geq 400$ is bounded by $\frac{10^{-5}}{n^5}$, hence our claim follows in this range. 

Finally for $n\leq 400$ we check Proposition~\ref{Prop:effective bound} directly using the following result due to
Chakerian and Logothetti \cite{CL}.
\begin{Lem}
We have
\[
\intl_0^\infty\left(\frac{\sin x}{x}\right)^n\;dx = \frac{\pi}{2^n(n-1)!}\sum_{j=0}^{\lfloor n/2\rfloor} (-1)^j\binom{n}{j}(n-2j)^{n-1}.
\]
\end{Lem}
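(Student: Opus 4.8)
The plan is to interpret $\left(\frac{\sin x}{x}\right)^n$ as a Fourier transform and to read off the integral from a Fourier inversion at the origin. First I would record the elementary identity
\[
\frac{\sin x}{x} = \frac12\int_{-1}^1 e^{ixt}\,dt,
\]
which exhibits $\frac{\sin x}{x}$ as the Fourier transform of the probability density $h = \frac12\mathbf{1}_{[-1,1]}$. Raising to the $n$-th power turns the product into a convolution, so that $\left(\frac{\sin x}{x}\right)^n$ is the Fourier transform of the $n$-fold convolution $p_n = h^{*n}$, that is, of the density of the sum $S = t_1+\dots+t_n$ of $n$ independent variables each uniform on $[-1,1]$.

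Next, since for $n\geq 2$ the integrand decays like $x^{-n}$ and is therefore absolutely integrable, Fourier inversion at $s=0$ gives
\[
p_n(0) = \frac{1}{2\pi}\int_{-\infty}^\infty\left(\frac{\sin x}{x}\right)^n\,dx,
\]
and by symmetry of the integrand
\[
\int_0^\infty\left(\frac{\sin x}{x}\right)^n\,dx = \pi\,p_n(0).
\]
For $n=1$ I would argue separately, using the conditional convergence of the Dirichlet integral and the fact that $0$ is a point of continuity of $p_1$; alternatively one simply checks the single case $n=1$ by hand, where the claimed formula reduces to $\frac{\pi}{2}$.

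It then remains to compute the density $p_n$ at the origin in closed form. I would pass to the normalized variables $u_i = (t_i+1)/2$, uniform on $[0,1]$, whose sum $U$ has the Irwin--Hall density
\[
B_n(y) = \frac{1}{(n-1)!}\sum_{j=0}^{\lfloor y\rfloor}(-1)^j\binom{n}{j}(y-j)^{n-1},
\]
a formula I would establish by induction on $n$ from the convolution recursion $B_{n+1}=B_n * \mathbf{1}_{[0,1]}$, working with the truncated power functions $(y-j)_+^{n-1}$ and the Pascal identity for binomial coefficients. Since $S = 2U-n$, the change of variables yields $p_n(0)=\frac12 B_n(n/2)$, and substituting $y=n/2$, where $(n/2-j)^{n-1}=(n-2j)^{n-1}/2^{n-1}$, gives
\[
p_n(0) = \frac{1}{2^n(n-1)!}\sum_{j=0}^{\lfloor n/2\rfloor}(-1)^j\binom{n}{j}(n-2j)^{n-1},
\]
which, multiplied by $\pi$, is exactly the claimed identity. (For $n$ even the boundary term $j=n/2$ vanishes since $(n-2j)^{n-1}=0$, so no ambiguity arises from evaluating $B_n$ at the integer $n/2$.)

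The routine parts are the convolution bookkeeping and the inductive proof of the Irwin--Hall formula; the only genuinely delicate point is the justification of Fourier inversion, in particular handling the small values of $n$ and verifying that $0$ is a continuity point of $p_n$ so that the pointwise inversion formula is valid there. An alternative, entirely contour-integral route is available --- expand $\sin^n x = (2i)^{-n}(e^{ix}-e^{-ix})^n$ by the binomial theorem and integrate $\left(\frac{\sin x}{x}\right)^n$ over a large semicircle, collecting the contribution at $x=0$ --- but this demands the same care in regrouping terms so that the individually non-integrable summands $e^{i(n-2j)x}/x^n$ recombine into the entire integrand, and I would fall back on it only if the continuity argument for $n=1$ proves awkward.
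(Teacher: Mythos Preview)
Your argument is correct and complete: the Fourier-inversion reading of $\left(\frac{\sin x}{x}\right)^n$ as the characteristic function of a sum of uniforms, followed by the Irwin--Hall formula evaluated at the midpoint, is the standard route to this identity, and you have handled the boundary case $j=n/2$ and the small-$n$ integrability issue appropriately.

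The paper itself does not prove this lemma at all; it is quoted as a known result of Chakerian and Logothetti and used only as a computational tool to verify Proposition~\ref{Prop:effective bound} numerically for $n\leq 400$. So there is nothing to compare your approach against. It is worth noting, however, that the paper's introduction already hints at exactly your interpretation --- the volume of the central diagonal slice of the $n$-cube is precisely $p_n(0)$ up to normalisation, and the reference to sums of independent uniforms is made explicit --- so your proof is very much in the spirit of the cited source.
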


\section{Proof of Proposition~\ref{Prop:Application}}

Note first that for all $n\geq 1$ we have that the $n^{-3}$ term in the asymptotic series in Proposition~\ref{Prop:effective bound} dominates the error term, hence, for all $n$ the right hand side of (\ref{eq:Application}) is at most  
\[
\sqrt{\frac{\pi n}{6}}\left(1-\frac{3}{20n}-\frac{13}{1120n^2}\right)^{-1}.
\]
To estimate the left hand side we use Stirling's formula in the following form, see \cite[6.1.38]{AS}
\begin{Lem}
We have
\[
n! = \left(\frac{n}{e}\right)^n\sqrt{2\pi n} e^{\Ot\left(\frac{1}{12 n}\right)}.
\]
\end{Lem}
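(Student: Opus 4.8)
The plan is to prove this effective form of Stirling's formula by the classical monotone-sequence argument, since all that is required is two-sided control $0<\theta_n<\frac{1}{12n}$ of the exponent. First I would set
\[
a_n = \log(n!) - \left(n+\tfrac12\right)\log n + n,
\]
so that the assertion is equivalent to $a_n = \tfrac12\log(2\pi) + \theta_n$ with $0<\theta_n<\frac{1}{12n}$, whence $\theta_n = \Ot\!\left(\frac{1}{12n}\right)$ with implied constant in $(0,1)$.

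Next I would analyse the telescoping difference. Writing $t=\frac{1}{2n+1}$, so that $\frac{n+1}{n}=\frac{1+t}{1-t}$ and $n+\tfrac12=\frac{1}{2t}$, the identity $\log\frac{1+t}{1-t}=2\sum_{j\ge0}\frac{t^{2j+1}}{2j+1}$ yields
\[
a_n - a_{n+1} = \left(n+\tfrac12\right)\log\frac{n+1}{n} - 1 = \sum_{j\ge1}\frac{1}{(2j+1)(2n+1)^{2j}}.
\]
This is manifestly positive, so $a_n$ is strictly decreasing. For the upper bound I would use $\frac{1}{2j+1}\le\frac13$ for $j\ge1$ and sum the resulting geometric tail, obtaining
\[
a_n - a_{n+1} \le \frac13\cdot\frac{1}{(2n+1)^2-1} = \frac{1}{12\,n(n+1)} = \frac{1}{12n} - \frac{1}{12(n+1)}.
\]
Hence $a_n$ decreases while $a_n-\frac{1}{12n}$ increases, so both converge to a common limit $C$ and $C<a_n<C+\frac{1}{12n}$ for every $n$.

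Finally I would identify the constant as $C=\tfrac12\log(2\pi)$. From $a_n\to C$ one has $n!\sim e^{C}\sqrt{n}\,(n/e)^n$; substituting this into the central-binomial form of Wallis's product, $\binom{2n}{n}\sim\frac{4^n}{\sqrt{\pi n}}$, forces $e^{C}=\sqrt{2\pi}$. Setting $\theta_n=a_n-C$ then gives $0<\theta_n<\frac{1}{12n}$, and exponentiating $\log(n!)=(n+\tfrac12)\log n - n + \tfrac12\log(2\pi)+\theta_n$ produces the stated formula.

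The main obstacle is securing the clean telescoping upper bound with the exact constant $\frac{1}{12}$: a crude geometric estimate overshoots, and it is precisely the combination of the term-by-term comparison $\frac{1}{2j+1}\le\frac13$ with the factorisation $(2n+1)^2-1=4n(n+1)$ that makes the bound telescope to $\frac{1}{12n}-\frac{1}{12(n+1)}$, pinning down the factor $\frac{1}{12n}$ in the statement. Everything else — monotonicity, convergence, and the Wallis identification of $C$ — is routine once this bound is in hand.
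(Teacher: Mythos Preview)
Your argument is correct: the telescoping computation for $a_n-a_{n+1}$, the geometric bound giving $\frac{1}{12n}-\frac{1}{12(n+1)}$, and the Wallis identification of the constant are all standard and carried out without error, and they indeed yield the slightly sharper two-sided estimate $0<\theta_n<\frac{1}{12n}$, hence $\theta_n=\Ot\!\left(\frac{1}{12n}\right)$.

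The paper, however, does not prove this lemma at all; it simply quotes it from Abramowitz--Stegun \cite[6.1.38]{AS}. So your proposal is a self-contained proof where the paper offers only a citation. What your approach buys is independence from the reference and the extra sign information $\theta_n>0$; what the paper's approach buys is brevity, since for the application (bounding the central binomial coefficient) only the weak form $|\theta_n|\le\frac{1}{12n}$ is used.
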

We conclude that
\[
\binom{2m}{m} = \frac{2m!}{m!^2} = \frac{2^{2m}}{\sqrt{\pi m}}e^{\Ot\left(\frac{1}{6m}\right)}
\]
and
\[
\binom{2m+1}{m}=\frac{1}{2}\binom{2m+2}{m+1} = \frac{2^{2m+1}}{\sqrt{\pi (m+1)}}e^{\Ot\left(\frac{1}{6(m+1)}\right)}.
\]
We obtain that the left hand side of (\ref{eq:Application}) is 
\[
\frac{n}{\sqrt{\pi\lfloor n/2\rfloor}}e^{\Ot\left(\frac{1}{3n}\right)}\geq \sqrt{\frac{2n}{\pi}}\left(1-\frac{1}{3n}\right)
\]
Hence, to prove Proposition~\ref{Prop:Application} it suffices to check that Proposition~\ref{Prop:effective bound} holds, and that 
\[
\sqrt{\frac{2n}{\pi}}\left(1-\frac{1}{3n}\right) > \sqrt{\frac{\pi n}{6}}\left(1-\frac{3}{20n}-\frac{13}{1120n^2}\right)^{-1},
\]
that is, 
\[
\left(1-\frac{1}{3n}\right)\left(1-\frac{3}{20n}-\frac{13}{1120n^2}\right) > \sqrt{\frac{\pi^2}{12}} = 0.9068\ldots,
\]
and we see that this inequality holds for $n\geq 6$. We conclude that Proposition~\ref{Prop:Application} holds for $n\not\in\{1,2,3,4,5,6,8,10\}$, and by direct inspection we find that it holds for all $n\neq 4$.

\end{document}